\newtheorem{theorem}{Theorem}[section]
\newtheorem{lemma}[theorem]{Lemma}
\newtheorem{corollary}[theorem]{Corollary}
\theoremstyle{definition}}
\theoremstyle{definition}}
\theoremstyle{definition}\newtheorem{definition}[theorem]{Definition}}
\theoremstyle{definition}\newtheorem{remark}[theorem]{Remark}}
\newtheorem*{thmhl}{Theorem HL}
\newtheorem*{thmbm}{Theorem BM}
\numberwithin{equation}{section}
\def\C{{\mathbb C}}
\def\N{{\mathbb N}}
\def\Z{{\mathbb Z}}
\def\R{{\mathbb R}}
\def\K{{\mathbb K}}
\def\LL{{\mathcal L}}
\def\rr{{\mathcal R}}
\def\pp{{\mathcal P}}
\def\kk{{\bf k}}
\def\epsilon{\varepsilon}
\def\kappa{\varkappa}
\def\phi{\varphi}
\def\leq{\leqslant}
\def\geq{\geqslant}
\def\ker{\hbox{\tt ker}\,}
\def\spann{\hbox{\tt span}\,}
\def\deg{\hbox{\tt deg}\,}
\title{Mixing operators on spaces with weak topology}
\author{Stanislav Shkarin}
\date{}
\begin{document}

\maketitle

\begin{abstract} We prove that a continuous linear operator $T$ on a
topological vector space $X$ with weak topology is mixing if and
only if the dual operator $T'$ has no finite dimensional invariant
subspaces. This result implies the characterization of hypercyclic
operators on the space $\omega$ due to Herzog and Lemmert and
implies the result of Bayart and Matheron, who proved that for any
hypercyclic operator $T$ on $\omega$, $T\oplus T$ is also
hypercyclic.
\end{abstract}

\small \noindent{\bf MSC:} \ \ 47A16, 37A25

\noindent{\bf Keywords:} \ \ Hypercyclic operators; transitive
operators; mixing operators; weak topology \normalsize

\section{Introduction \label{s1}}\rm

All topological vector spaces in this article {\it are assumed to be
Hausdorff} and are over the field $\K$, being either the field $\C$
of complex numbers or the field $\R$ of real numbers. As usual, $\Z$
is the set of integers and $\N$ is the set of positive integers. If
$X$ and $Y$ are vector spaces over the same field $\kk$, symbol
$L(X,Y)$ stands for the space of $\kk$-linear maps from $X$ to $Y$.
If $X$ and $Y$ are topological vector spaces, then $\LL(X,Y)$ is the
space of continuous linear operators from $X$ to $Y$. We write
$L(X)$ instead of $L(X,X)$, $\LL(X)$ instead of $\LL(X,X)$ and $X'$
instead of $\LL(X,\K)$. For each $T\in \LL(X)$, the dual operator
$T':X'\to X'$ is defined as usual: $(T'f)(x)=f(Tx)$ for $f\in X'$
and $x\in X$. We say that the topology $\tau$ of a topological
vector space $X$ {\it is weak} if $\tau$ is exactly the weakest
topology making each $f\in Y$ continuous for some linear space $Y$
of linear functionals on $X$ separating points of $X$. We use symbol
$\omega$ to denote the product of countably many copies of $\K$. It
is easy to see that $\omega$ is a separable complete metrizable
topological vector space, whose topology is weak.

Let $X$ be a topological vector space and $T\in\LL(X)$. A vector
$x\in X$ is called a {\it hypercyclic vector} for $T$ if
$\{T^nx:n\in\N\}$ is dense in $X$ and  $T$ is called {\it
hypercyclic} if it has a hypercyclic vector. Recall also that $T$ is
called {\it hereditarily hypercyclic} if for each infinite subset
$A$ of $\N$, there is $x\in X$ such that $\{T^nx:n\in A\}$ is dense
in $X$. Next, $T$ is called {\it transitive} if for any non-empty
open subsets $U$ and $V$ of $X$, there is $n\in\N$ for which
$T^n(U)\cap V\neq\varnothing$ and $T$ is called {\it mixing} if for
any non-empty open subsets $U$ and $V$ of $X$, there is $n\in\N$
such that $T^m(U)\cap V\neq\varnothing$ for each $n\geq m$. It is
well-known and easy to see that any hypercyclic operator (on any
topological vector space) is transitive and any hereditarily
hypercyclic operator is mixing. If $X$ is complete separable and
metrizable, then the converse implications hold: any transitive
operator is hypercyclic and any mixing operator is hereditarily
hypercyclic. For the proof of these facts as well as for any
additional information on the above classes of operators we refer to
the book \cite{bama-book} and references therein. Herzog and Lemmert
\cite{gele} characterized hypercyclic operators on $\omega$.

\begin{thmhl}Let $\K=\C$ and $T\in \LL(\omega)$. Then $T$ is
hypercyclic if and only if the point spectrum $\sigma_p(T')$ of $T'$
is empty.
\end{thmhl}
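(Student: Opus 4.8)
The plan is to obtain Theorem HL as a corollary of the general characterization of mixing operators announced in the abstract, together with two elementary observations — one about the structure of $\omega$, one about the ground field being $\C$. Recall first that the topological dual of $\omega$ is the space $\phi$ of finitely supported scalar sequences, that is, the linear span of the coordinate functionals $e_1^*,e_2^*,\dots$, and that the topology of $\omega$ is exactly the weak topology determined by $\phi$; in particular $T'\in\LL(\phi)$, and $\sigma_p(T')=\varnothing$ says precisely that $T'-\lambda I$ is injective on $\phi$ for every $\lambda\in\C$, equivalently that $T-\lambda I$ has dense range in $\omega$ for every $\lambda$.

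For the necessity of the condition I would argue on an arbitrary Hausdorff topological vector space $X$, using nothing about $\omega$. Suppose $f\in X'\setminus\{0\}$ with $T'f=\lambda f$, and let $x$ be a hypercyclic vector for $T$. Then $f(T^nx)=\bigl((T')^nf\bigr)(x)=\lambda^n f(x)$ for every $n\in\N$, so the continuous functional $f$ carries the dense orbit $\{T^nx:n\in\N\}$ into $\{\lambda^n c:n\in\N\}$ with $c=f(x)$; since $f\neq0$ is onto $\C$ and the orbit is dense, this set must be dense in $\C$. But it never is: it equals $\{0\}$ when $c=0$ or $\lambda=0$, converges to $0$ when $0<|\lambda|<1$, lies on the circle $|z|=|c|$ when $c\neq 0$ and $|\lambda|=1$, and is disjoint from the disc $\{|z|<|\lambda|\,|c|\}$ when $c\neq 0$ and $|\lambda|>1$. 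This contradiction gives $\sigma_p(T')=\varnothing$.

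For the sufficiency, assume $\sigma_p(T')=\varnothing$. Because $\K=\C$, this is equivalent to $T'$ having no nonzero finite-dimensional invariant subspace: any such subspace $E\subseteq\phi$ would carry the operator $T'|_E$, which on a nonzero finite-dimensional complex space has an eigenvector, contradicting emptiness of $\sigma_p(T')$; conversely a single eigenvector already spans a one-dimensional invariant subspace. Since the topology of $\omega$ is weak, the theorem announced in the abstract now applies and yields that $T$ is mixing. Finally $\omega$ is separable, complete and metrizable, so by the facts recalled in the introduction a mixing operator on $\omega$ is hereditarily hypercyclic, and taking $A=\N$ in the definition of hereditary hypercyclicity shows $T$ to be hypercyclic.

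All the genuine content is therefore carried by the general theorem; granting it, Theorem HL is a three–line deduction and the two reductions above are everything that must be checked, so the honest answer to "what is the obstacle" is the general theorem itself, whose proof is the body of the paper. If one wanted a self-contained argument instead, one would establish transitivity of $T$ directly: describing a basic neighbourhood in $\omega$ through its first $N$ coordinates, and splitting a prospective vector as (prescribed first $N$ coordinates) plus (a tail supported on $\{N+1,N+2,\dots\}$), transitivity reduces to showing that for each $N$ there is an $n$ for which the map sending a tail to the first $N$ coordinates of its $n$-th $T$-iterate has dense — hence full — range in $\C^N$; dualizing, this range is full exactly when $\{v\in W_N:(T')^nv\in W_N\}=\{0\}$, where $W_N=\spann\{e_1^*,\dots,e_N^*\}$. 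Extracting this from $\sigma_p(T')=\varnothing$ — and, for mixing, extracting its analogue for all large $n$ — is the real difficulty: one ingredient is that a decreasing chain of subspaces of the finite-dimensional space $W_N$ must stabilise to a $(T')^n$-invariant subspace, which the complex-eigenvector argument then forces to be $\{0\}$, but turning this into the full rank statement is exactly the kind of reasoning that the general theorem packages.
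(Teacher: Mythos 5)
Your proposal is correct and, for the substantive direction, follows the paper's own route: the paper likewise deduces Theorem~HL from Theorem~\ref{omegA}, observing that over $\C$ the condition $\sigma_p(T')=\varnothing$ is equivalent to $T'$ having no non-trivial finite dimensional invariant subspace (every such subspace of a complex space contains an eigenvector), and that a mixing operator on the complete separable metrizable space $\omega$ is hypercyclic. The one place you diverge is the necessity direction: you give the classical direct argument that an eigenvector $f$ of $T'$ sends the dense orbit onto $\{\lambda^n f(x):n\in\N\}$, which your case analysis correctly shows is never dense in $\C$; the paper instead gets this from the implication $(\ref{omegA}.2)\Rightarrow(\ref{omegA}.1)$, passing to the quotient by a closed invariant subspace of finite codimension and invoking the non-existence of transitive operators on non-trivial finite dimensional spaces. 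Your version is more elementary and self-contained on that side, while the paper's quotient argument is what makes Theorem~\ref{omegA} work over $\R$ as well, where eigenvectors need not exist; for Theorem~HL itself, which assumes $\K=\C$, either route is fine. You are also right that all the real work sits in the general theorem, whose proof occupies the body of the paper.
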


Another result concerning hypercyclic operators on $\omega$ is due
to Bayart and Matheron \cite{bama}.

\begin{thmbm}For any hypercyclic operator $T\in \LL(\omega)$, $T\oplus
T$ is also hypercyclic.
\end{thmbm}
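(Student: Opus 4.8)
The plan is to reduce this to the characterisation of mixing operators announced in the abstract: an operator $S\in\LL(X)$ on a topological vector space $X$ whose topology is weak is mixing precisely when $S'$ has no finite-dimensional invariant subspace. Granting that, and recalling that $\omega$ is complete, separable, metrizable and carries a weak topology, it suffices to establish two claims: (a) if $T\in\LL(\omega)$ is hypercyclic then $T'$ has no finite-dimensional invariant subspace; and (b) if $T'$ has no finite-dimensional invariant subspace, then neither does $(T\oplus T)'$. Indeed, (a) and the characterisation make a hypercyclic $T$ mixing, then (b) and the characterisation make $T\oplus T$ mixing, and since $\omega\oplus\omega$ is (isomorphic to) $\omega$, hence complete separable metrizable, $T\oplus T$ is then hereditarily hypercyclic, in particular hypercyclic.

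For (a): suppose $E\subseteq\omega'$ is $T'$-invariant with $d=\dim E\geq1$; fix a basis $f_1,\dots,f_d$ of $E$ and define $\phi:\omega\to\K^d$ by $\phi(x)=(f_1(x),\dots,f_d(x))$. Then $\phi$ is linear and continuous, and surjective, since a vector $(c_1,\dots,c_d)$ annihilating $\phi(\omega)$ would give $\sum_jc_jf_j=0$ on $\omega$, forcing $c=0$ by linear independence of the $f_j$. As $T'E\subseteq E$, writing $T'f_j=\sum_ib_{ij}f_i$ yields $\phi\circ T=C\circ\phi$ with $C=(b_{ij})^{\top}$, so $\phi(T^nx)=C^n\phi(x)$ for every $n$. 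If $x$ were hypercyclic for $T$, then $\{C^n\phi(x):n\in\N\}=\phi(\{T^nx:n\in\N\})$ would be dense in $\K^d$ by continuity and surjectivity of $\phi$, i.e. $C$ would be a hypercyclic operator on the non-zero finite-dimensional space $\K^d$; this is impossible, as no non-zero finite-dimensional space admits a hypercyclic operator (see \cite{bama-book}). This contradiction proves (a).

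For (b): identify $(T\oplus T)'$ with $T'\oplus T'$ acting on $\omega'\oplus\omega'$ and let $\pi_1,\pi_2$ be the coordinate projections $\omega'\oplus\omega'\to\omega'$. If $F\subseteq\omega'\oplus\omega'$ were a non-zero finite-dimensional $(T'\oplus T')$-invariant subspace, then each $\pi_k(F)$ would be finite-dimensional and, since $(T'\oplus T')(f,g)=(T'f,T'g)$, also $T'$-invariant; as $F\neq\{0\}$, one of $\pi_1(F),\pi_2(F)$ is non-zero, contradicting the hypothesis of (b). Hence $(T\oplus T)'$ has no finite-dimensional invariant subspace, which completes the deduction.

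The one genuinely hard step is the characterisation itself --- concretely, the implication ``$S'$ has no finite-dimensional invariant subspace $\Rightarrow$ $S$ mixing'' --- which is the main content of the paper; granted it, everything above is elementary. The only points needing a little care are the surjectivity of $\phi$ in (a) and the classical fact that finite-dimensional spaces carry no hypercyclic operators, which has to be used over $\R$ as well as over $\C$ (over $\C$ it is immediate from the existence of an eigenvalue for the transpose matrix).
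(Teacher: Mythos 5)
Your proof is correct, and it follows the paper's overall strategy --- everything is funneled through the characterization in Theorem~\ref{omegA} --- but you handle the two halves a bit differently from the text. For the first half, the paper simply invokes the implication (\ref{omegA}.2)$\Rightarrow$(\ref{omegA}.1) (hypercyclic $\Rightarrow$ transitive $\Rightarrow$ no finite dimensional invariant subspaces for $T'$); your claim (a), with the quotient-like map $\phi(x)=(f_1(x),\dots,f_d(x))$ intertwining $T$ with a matrix $C$ on $\K^d$, is in substance a re-proof of exactly that implication, identical to the argument in Section~3 (pass to the finite dimensional quotient, observe that $\K^d$ with $d\geq 1$ carries no hypercyclic operator, cf.\ \cite{ww}). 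For the second half the routes genuinely diverge: the paper uses the general dynamical fact that the direct sum of two mixing operators is mixing, whereas you stay entirely inside the algebraic characterization by checking that $(T\oplus T)'\cong T'\oplus T'$ has no non-trivial finite dimensional invariant subspace (via the coordinate projections $\pi_k$, which correctly yield $T'$-invariant finite dimensional images) and then applying Theorem~\ref{omegA} a second time to $\omega\oplus\omega\cong\omega$. Your version buys a self-contained deduction that never leaves the ``no finite dimensional invariant subspaces'' condition, and in fact shows with no extra work that $T_1\oplus T_2$ is mixing for any two operators on $\omega$ each satisfying (\ref{omegA}.1); the paper's version is shorter because it outsources the direct-sum step to a standard lemma. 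Both are complete; the only step of real substance in either treatment is Theorem~\ref{omegA} itself, which you correctly identify as the load-bearing ingredient.
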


We refer to \cite{beco,peter1} for results on the structure of the
set of hypercyclic vectors of operators on $\omega$ and to
\cite{chan,shk1} for results on hypercyclicity of operators on
Banach spaces endowed with its weak topology. We characterize
transitive and mixing operators on spaces with weak topology.

\begin{theorem}\label{omegA} Let $X$ be a topological vector space,
whose topology is weak and $T\in \LL(X)$. Then the following
conditions are equivalent$:$
\begin{itemize}\itemsep=-2pt
\item[\rm(\ref{omegA}.1)]$T'$ has no non-trivial finite dimensional invariant subspaces$;$
\item[\rm(\ref{omegA}.2)]$T$ is transitive$;$
\item[\rm(\ref{omegA}.3)]$T$ is mixing$;$
\item[\rm(\ref{omegA}.4)]for any non-empty open subsets $U$ and $V$
of $X$, there is $k\in\N$ such that $p(T)(U)\cap V\neq\varnothing$
for any polynomial $p$ of degree $\geq k$.
\end{itemize}
\end{theorem}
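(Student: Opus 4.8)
The plan is to establish the cycle $(\ref{omegA}.4)\Rightarrow(\ref{omegA}.3)\Rightarrow(\ref{omegA}.2)\Rightarrow(\ref{omegA}.1)\Rightarrow(\ref{omegA}.4)$. The implication $(\ref{omegA}.3)\Rightarrow(\ref{omegA}.2)$ is immediate, and $(\ref{omegA}.4)\Rightarrow(\ref{omegA}.3)$ follows by specializing the polynomials in $(\ref{omegA}.4)$ to the monomials $p(z)=z^m$: the integer $k$ furnished by $(\ref{omegA}.4)$ then witnesses mixing. Thus the substance lies in $(\ref{omegA}.2)\Rightarrow(\ref{omegA}.1)$ and, above all, in the hard implication $(\ref{omegA}.1)\Rightarrow(\ref{omegA}.4)$.

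For $(\ref{omegA}.2)\Rightarrow(\ref{omegA}.1)$ I would argue by contraposition. If $F\subseteq X'$ is a non-trivial finite dimensional $T'$-invariant subspace, choose a basis $h_1,\dots,h_N$ of $F$ with $N\geq1$ and form the continuous linear map $\Phi=(h_1,\dots,h_N):X\to\K^N$. Since the $h_i$ are linearly independent functionals, $\Phi$ is onto, and $T'$-invariance of $F$ gives a matrix $B\in L(\K^N)$ with $\Phi T=B\Phi$. If $T$ were transitive, this would transfer through $\Phi$ (preimages under $\Phi$ of nonempty open sets are nonempty and open) to transitivity of $B$ on $\K^N$; but $\K^N$ is separable and completely metrizable, so $B$ would be hypercyclic, which is impossible on a nonzero finite dimensional space. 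Hence $T$ is not transitive.

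The main work is $(\ref{omegA}.1)\Rightarrow(\ref{omegA}.4)$. Given nonempty open $U,V$, I first pick $a\in U$, $b\in V$ and, using that the topology of $X$ is weak, shrink $U$ and $V$ to sets of the form $U=\{x:|h_i(x-a)|<\delta,\ i\leq N\}$ and $V=\{x:|h_i(x-b)|<\delta,\ i\leq N\}$ with a common linearly independent family $h_1,\dots,h_N\in X'$ and $\delta>0$ (the case $N=0$ is trivial). Put $F=\spann\{h_1,\dots,h_N\}\subseteq X'$. Since $p(T)'=p(T')$, one has $h_i(p(T)u)=(p(T')h_i)(u)$, so it suffices to produce, for every polynomial $p$ of sufficiently large degree, a vector $u$ with $h_i(u)=h_i(a)$ and $(p(T')h_i)(u)=h_i(b)$ for all $i$; such a $u$ then lies in $U$ and satisfies $p(T)u\in V$. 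This linear system is solvable whenever the $2N$ functionals $h_1,\dots,h_N,p(T')h_1,\dots,p(T')h_N$ are linearly independent, which in turn holds as soon as $p(T')$ is injective on $F$ and $p(T')(F)\cap F=\{0\}$, i.e. as soon as $p(T')h\notin F$ for every $h\in F\setminus\{0\}$. So everything reduces to finding $k\in\N$ such that $p(T')h\notin F$ for all $h\in F\setminus\{0\}$ and all polynomials $p$ of degree $\geq k$.

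To obtain such a $k$ — and this is the point where the real difficulty lies — I would regard $X'$ as a module over $\K[t]$ with $t$ acting as $T'$. Condition $(\ref{omegA}.1)$ says exactly that $X'$ has no nonzero finite dimensional $T'$-invariant subspace, which (via Cayley--Hamilton in one direction, and ``a nonzero torsion element generates a finite dimensional submodule'' in the other) is equivalent to $X'$ being torsion free over $\K[t]$. Hence the submodule $E$ generated by $F$ is finitely generated and torsion free over the principal ideal domain $\K[t]$, so it is free: $E\cong\K[t]^r$. Under this isomorphism the finite dimensional space $F$ is carried into $\bigoplus_{i=1}^{r}\{q\in\K[t]:\deg q\leq M\}$ for some $M\in\N$. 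Now if $h\in F\setminus\{0\}$ and $q\neq0$ is a polynomial with $q(T')h\in F$, then $q(T')h\neq0$ by torsion freeness, and comparing degrees in a coordinate in which $h$ (viewed in $\K[t]^r$) is a nonzero polynomial forces $\deg q\leq M$. Since torsion freeness also makes $h,T'h,(T')^2h,\dots$ linearly independent, taking $q=p$ shows that $p(T')h\in F$ forces $\deg p\leq M$ for every $p\neq 0$. Thus $k:=M+1$ works, completing $(\ref{omegA}.1)\Rightarrow(\ref{omegA}.4)$. The main obstacle is precisely this last step: converting the qualitative hypothesis ``$T'$ has no non-trivial finite dimensional invariant subspace'' into torsion freeness of the $\K[t]$-module $X'$, and then extracting from the free structure of $E$ a single degree bound $M$ valid simultaneously for all $h\in F\setminus\{0\}$.
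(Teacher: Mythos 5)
Your argument is correct, and its skeleton coincides with the paper's: the trivial implications, the contrapositive of (\ref{omegA}.2)$\Rightarrow$(\ref{omegA}.1) via the quotient map onto $\K^N$ induced by a finite dimensional $T'$-invariant subspace, and the reduction of (\ref{omegA}.1)$\Rightarrow$(\ref{omegA}.4) to a purely algebraic statement: for the finite dimensional subspace $F\subset X'$ spanned by the defining functionals of $U$ and $V$, one must find $k$ with $p(T')h\notin F$ for all $h\in F\setminus\{0\}$ and all $p$ with $\deg p\geq k$ (this is the paper's Lemma~\ref{empty1} combined with Lemma~\ref{inj}). Where you genuinely diverge is in the proof of that degree bound. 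The paper builds, by hand, an embedding $J$ of a suitable $T'$-invariant subspace containing $F$ into $\rr^{(A)}$ ($\rr=\K(t)$ the field of rational functions), via maximal $T'$-independent sets, and then runs a two-sided degree estimate $\Delta^-\leq\delta(f)\leq\Delta^+$ on rational functions; this detour yields the standalone structural result Theorem~\ref{inva} (every operator without non-trivial finite dimensional invariant subspaces is a restriction of some $M^{(A)}$). You instead observe that (\ref{omegA}.1) is equivalent to $X'$ being torsion-free as a $\K[t]$-module ($t$ acting as $T'$), invoke the structure theorem to conclude that the finitely generated submodule generated by $F$ is free, and read off a single degree bound $M$ from the coordinates of a basis of $F$ in $\K[t]^r$. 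This is shorter and stays inside the polynomial ring (the paper's construction is essentially a hand-made localization at $\pp^*$), at the cost of quoting the classification of finitely generated torsion-free modules over a PID and of not producing the paper's Theorem~\ref{inva} as a by-product. Both routes ultimately rest on the same two facts: torsion-freeness (Lemma~\ref{inj}) and a uniform degree estimate on a finitely generated piece.
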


Since $\omega$ is complete, separable, metrizable and carries weak
topology, we obtain the following corollary.

\begin{corollary}\label{omeg} Let  $T\in \LL(\omega)$. Then the following conditions
are equivalent
\begin{itemize}\itemsep=-2pt
\item[\rm(\ref{omeg}.1)]$T'$ has no non-trivial finite dimensional invariant subspaces$;$
\item[\rm(\ref{omeg}.2)]$T$ is hypercyclic$;$
\item[\rm(\ref{omeg}.3)]$T$ is hereditarily hypercyclic$;$
\item[\rm(\ref{omeg}.4)]for any sequence $\{p_k\}_{k\in\N}$ of
polynomials with $\deg p_k\to\infty$, there is $x\in \omega$ such
that $\{p_k(T)x:k\in\N\}$ is dense in $\omega$.
\end{itemize}
\end{corollary}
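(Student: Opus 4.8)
The plan is to deduce everything from Theorem~\ref{omegA} together with the classical Baire category passages between transitivity, mixing and (hereditary) hypercyclicity. Since $\omega$ carries a weak topology, Theorem~\ref{omegA} applies directly and shows that (\ref{omeg}.1) is equivalent to transitivity of $T$, to mixing of $T$ and to the polynomial condition (\ref{omegA}.4). Because $\omega$ is complete, separable and metrizable, a transitive operator on $\omega$ is hypercyclic and a mixing operator on $\omega$ is hereditarily hypercyclic (the implications recalled in Section~\ref{s1}); conversely, hypercyclic operators are always transitive and hereditarily hypercyclic operators are always mixing. Hence (\ref{omeg}.1), (\ref{omeg}.2) and (\ref{omeg}.3) are all equivalent, and the only remaining work is to splice (\ref{omeg}.4) into this chain.

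For the implication (\ref{omegA}.4)$\Rightarrow$(\ref{omeg}.4) I would run the standard category argument. Fix a countable base $\{V_j:j\in\N\}$ of non-empty open subsets of $\omega$ and a sequence $\{p_k\}_{k\in\N}$ of polynomials with $\deg p_k\to\infty$. For each $j$ put $G_j=\bigcup_{k\in\N}p_k(T)^{-1}(V_j)$; this set is open since each $p_k(T)\in\LL(\omega)$ is continuous. To see that $G_j$ is dense, take any non-empty open $U\subseteq\omega$: by (\ref{omegA}.4) there is $m$ with $p(T)(U)\cap V_j\neq\varnothing$ for every polynomial $p$ of degree $\geq m$, and since $\deg p_k\to\infty$ there is $k$ with $\deg p_k\geq m$, so $p_k(T)(U)\cap V_j\neq\varnothing$, i.e. $U\cap G_j\neq\varnothing$. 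Thus each $G_j$ is dense and open, and by the Baire category theorem $\bigcap_{j}G_j$ is non-empty (in fact dense); any $x$ in this intersection meets every $V_j$ under some $p_k(T)$, i.e. $\{p_k(T)x:k\in\N\}$ is dense in $\omega$.

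The converse (\ref{omeg}.4)$\Rightarrow$(\ref{omeg}.2) is immediate: applying (\ref{omeg}.4) to the polynomials $p_k(z)=z^k$, whose degrees tend to infinity, yields $x$ with $\{T^kx:k\in\N\}$ dense, i.e. a hypercyclic vector for $T$. Together with the equivalences above this closes the cycle. I do not anticipate a genuine obstacle: all the substance is already carried by Theorem~\ref{omegA}, and the only points needing care are routine — that $\omega$ is a Baire space with a countable base, so that both the category argument and the transitive$\Rightarrow$hypercyclic, mixing$\Rightarrow$hereditarily hypercyclic passages are legitimate, which is standard for the Fréchet space $\omega$.
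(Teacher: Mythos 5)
Your proof is correct and follows essentially the same route the paper intends: the paper derives the corollary directly from Theorem~\ref{omegA} using that $\omega$ is complete, separable and metrizable, and your Baire category argument for splicing in condition (\ref{omeg}.4) is exactly the standard passage the author leaves implicit.
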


Note that in the case $\K=\C$, $T'$ has no non-trivial finite
dimensional invariant subspaces if and only if
$\sigma_p(T')=\varnothing$. Moreover, the direct sum of two mixing
operators is always mixing. Thus Theorem~HL and Theorem~BM follow
from Theorem~\ref{omegA}.

\begin{remark}\label{Rem2} Chan and Sanders \cite{chan} observed
that on $(\ell_2)_\sigma$, being $\ell_2$ with the weak topology,
there is a transitive non-hypercyclic operator. Theorem~\ref{omegA}
provides a huge supply of such operators. For instance, the backward
shift $T$ on $\ell_2$ is mixing on $(\ell_2)_\sigma$ since $T'$ has
no non-trivial finite dimensional invariant subspaces and $T$ is
non-hypercyclic since each its orbit is bounded.
\end{remark}

Since each weak topology is determined by the corresponding space of
linear functionals, it comes as no surprise that Theorem~\ref{omegA}
is algebraic in nature. Indeed, we derive it from the following
characterization of linear maps without finite dimensional invariant
subspaces. The idea of the proof is close to that Herzog and Lemmert
\cite{gele}, although by reasoning on a more abstract level, we were
able to get a result, which is simultaneously stronger and more
general.

We start by introducing some notation. Let $\kk$ be a field. Symbol
$\pp$ stands for the algebra $\kk[t]$ of polynomials in one variable
over $\kk$, while $\rr$ is the field $\kk(t)$ of rational functions
in one variable over $\kk$. Consider the $\kk$-linear map
\begin{equation*}
M:\rr\to\rr,\qquad Mf(z)=zf(z).
\end{equation*}
If $A$ is a set and $X$ is a vector space, then $X^{(A)}$ stands for
the algebraic direct sum of copies of $X$ labeled by $A$:
$$
X^{(A)}=\bigoplus_{\alpha\in A}\rr=\bigl\{x\in X^A:\{\alpha\in
A:x_\alpha\neq 0\}\ \ \text{is finite}\bigr\}.
$$
Symbol $M^{(A)}$ stands for the linear operator on $\rr^{(A)}$,
being the direct sum of copies of $M$ labeled by $A$. That is,
$$
M^{(A)}\in L(\rr^{(A)}),\quad (M^{(A)}f)_\alpha=Mf_\alpha\ \ \
\text{for each}\ \ \alpha\in A.
$$
It is easy to see that each $M^{(A)}$ has no non-trivial finite
dimensional invariant subspaces. Obviously, the same holds true for
each restriction of $M^{(A)}$ to an invariant subspace.

\begin{theorem}\label{inva} Let $X$ be a vector space
over a field $\kk$ and $T\in L(X)$. Then $T$ has no non-trivial
finite dimensional invariant subspaces if and only if $T$ is similar
to a restriction of some $M^{(A)}$ to an invariant subspace.
\end{theorem}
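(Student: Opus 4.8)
The plan is to recognise the hypothesis on $T$ as torsion-freeness of a module and then build a field of fractions by hand. View the pair $(X,T)$ as a module over the polynomial ring $\pp=\kk[t]$, with $t$ acting as $T$; then $T$-invariant subspaces are exactly $\pp$-submodules. The first step is the elementary observation that $T$ has a non-trivial finite dimensional invariant subspace if and only if there are $x\in X\setminus\{0\}$ and $p\in\pp\setminus\{0\}$ with $p(T)x=0$: if such a subspace exists, the $T$-orbit of any non-zero vector in it is linearly dependent, and conversely $\spann\{T^nx:n\geq 0\}$ is a non-zero finite dimensional invariant subspace whenever $p(T)x=0$ with $x,p\neq 0$. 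In module language the hypothesis of the theorem says precisely that $X$ is torsion-free over $\pp$.

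The ``if'' direction is then immediate. On $\rr^{(A)}$ the operator $M^{(A)}$ is multiplication by $t\in\rr=\kk(t)$, so $\rr^{(A)}$ is a vector space over the field $\rr$, hence torsion-free as a $\pp$-module; thus no non-zero element of $\rr^{(A)}$ is annihilated by a non-zero polynomial in $M^{(A)}$, and the same persists for every $M^{(A)}$-invariant subspace, and, since the property is preserved under similarity, for every $T$ similar to such a restriction. By the first step such a $T$ has no non-trivial finite dimensional invariant subspace.

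For the ``only if'' direction, assume $T$ has no non-trivial finite dimensional invariant subspace, equivalently no $x\in X\setminus\{0\}$ is killed by a non-zero $p(T)$. I would introduce the set $\widetilde X$ of formal fractions $x/p$ with $x\in X$, $p\in\pp\setminus\{0\}$, with $x/p=y/q$ declared to mean $q(T)x=p(T)y$; the hypothesis is exactly what is needed for transitivity of this relation, since from $q(T)\bigl(r(T)x-p(T)z\bigr)=0$ with $q\neq 0$ one concludes $r(T)x=p(T)z$. Equipped with the usual sum and with $\rr$ acting by $(a/b)\cdot(x/p)=a(T)x/(bp)$, the set $\widetilde X$ becomes a vector space over $\rr$; the map $x\mapsto x/1$ is a linear injection of $X$ into $\widetilde X$ intertwining $T$ with the operator $\widetilde T$ of multiplication by $t$ on $\widetilde X$, so that $\widetilde T$ is $\rr$-linear. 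Finally, choosing an $\rr$-basis $\{e_\alpha\}_{\alpha\in A}$ of $\widetilde X$ and sending $\sum_\alpha c_\alpha e_\alpha\mapsto(c_\alpha)_{\alpha\in A}$ gives an $\rr$-linear isomorphism $\widetilde X\to\rr^{(A)}$ carrying $\widetilde T$ to $M^{(A)}$; its restriction to the image of $X$ exhibits $T$ as similar to the restriction of $M^{(A)}$ to an invariant subspace. Readers who prefer it may instead take $\widetilde X=X\otimes_{\pp}\rr$, the localisation of $X$ at $\pp\setminus\{0\}$, the point being that the localisation map is injective exactly when $X$ is torsion-free.

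I do not anticipate a genuine obstacle. The only points requiring care are the well-definedness of the equivalence relation and of the operations on $\widetilde X$ — transitivity of the relation being the single place where the hypothesis enters — together with the routine checks in the first step that passage to invariant subspaces and conjugation by a linear isomorphism preserve the relevant property. All the conceptual content is in that first step: ``no non-trivial finite dimensional invariant subspace'' is torsion-freeness over the principal ideal domain $\pp$, after which the statement is the classical construction of a module of fractions.
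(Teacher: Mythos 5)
Your proposal is correct, and at bottom it runs on the same mechanism as the paper's proof --- the coordinates of $x$ are the rational functions $p_a/q$ read off from a relation $q(T)x=\sum p_a(T)a$, and the no-invariant-subspace hypothesis is used exactly once, to make these coordinates well defined --- but you package it as the classical localisation of a torsion-free module over the principal ideal domain $\kk[t]$, whereas the paper works concretely. The paper first records (Lemma \ref{inj}) that the hypothesis is equivalent to injectivity of every $p(T)$ with $p\neq 0$, which is your torsion-freeness; it then introduces $T$-independent sets (precisely the $\pp$-linearly independent families, i.e.\ the families whose images are $\rr$-independent in your $\widetilde X$), takes a maximal one $A$ by Zorn's lemma, shows $X=F(A,T)$, and defines the intertwining injection $J:X\to\rr^{(A)}$ coordinatewise (Lemma \ref{tin}); your choice of an arbitrary $\rr$-basis of $X\otimes_{\pp}\rr$ after localising replaces the choice of a maximal $T$-independent set inside $X$ before localising. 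What your route buys is brevity and the identification of the theorem with a standard fact (a torsion-free module over an integral domain embeds in a vector space over its fraction field, on which multiplication by $t$ is invertible); what the paper's route buys is the relative version, Lemma \ref{tin}, which for a given \emph{finite} $T$-independent set $B$ produces the explicit invariant subspace $F(B,T)$ containing a prescribed finite dimensional $L$ and realises $T$ there as a restriction of $M^{(B)}$ with $B$ finite --- exactly the form needed later in the proof of Lemma \ref{empty1}, which your more abstract formulation would have to re-derive. Your verification points (transitivity of the fraction relation, well-definedness of the operations) are genuine and are exactly the uniqueness computation carried out at the start of the paper's Lemma \ref{tin}; there is no gap.
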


The above theorem is interesting on its own right. It also allows us
to prove the following lemma, which is the key ingredient in the
proof of Theorem~\ref{omegA}.

\begin{lemma} \label{empty1}
Let $X$ be a non-trivial vector space over a field $\kk$ and $T:X\to
X$ be a linear map with no non-trivial finite dimensional invariant
subspaces. Then for any finite dimensional subspace $L$ of $X$,
there is $m=m(L)\in \N$ such that $p(T)(L)\cap L=\{0\}$ for each
$p\in\pp$ with $\deg p\geq m$.
\end{lemma}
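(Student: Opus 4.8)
The plan is to use Theorem~\ref{inva} to replace the abstract hypothesis on $T$ by a concrete model in which each coordinate carries a degree function, and then to run a short degree estimate. First I would note that the conclusion is invariant under similarity: if $\phi\colon X\to Y$ is a linear isomorphism and $S=\phi T\phi^{-1}$, then $\phi$ maps $p(T)(L)\cap L$ onto $p(S)(\phi(L))\cap\phi(L)$, and $\phi(L)$ is finite dimensional, so one may take $m(L)=m(\phi(L))$. Applying Theorem~\ref{inva}, we may therefore assume that $X$ is an invariant subspace of $\rr^{(A)}$ for some set $A$ and that $T=M^{(A)}|_X$; then for every polynomial $p$, the operator $p(T)$ acts on $\rr^{(A)}$ coordinatewise as multiplication by the rational function $p(z)$.

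Next, since $L$ is finite dimensional, the union $B$ of the (finite) supports of the vectors in a basis of $L$ is a finite subset of $A$, and $L\subseteq\rr^{(B)}$. Multiplication by $p(z)$ does not enlarge supports, so $p(T)(L)\subseteq\rr^{(B)}$ as well; identifying $\rr^{(B)}$ with $\rr^n$, $n=|B|$, we reduce the lemma to the following claim: for every finite dimensional $\kk$-subspace $L$ of $\rr^n$ there is $m\in\N$ such that $p(z)L\cap L=\{0\}$ whenever $\deg p\geq m$, where $T$ is now multiplication by $z$ in each coordinate.

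To prove the claim, first clear denominators: pick $Q\in\pp\setminus\{0\}$ divisible by every denominator occurring among the coordinates of a basis of $L$. The map $v\mapsto Qv$ is an injective $\kk$-linear self-map of $\rr^n$ commuting with multiplication by every $p\in\pp$, so $pL\cap L=\{0\}$ if and only if $p(QL)\cap(QL)=\{0\}$; hence we may assume $L\subseteq\pp^n$. If $L=\{0\}$ any $m$ works, so assume $L\neq\{0\}$ and let $N$ be the maximum of $\deg v_j$ over $j\in\{1,\dots,n\}$ and over $v$ in a basis of $L$; then every $u\in L$ has all coordinates of degree $\leq N$. Put $m=N+1$. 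If $\deg p\geq m$ and $0\neq u\in L$, choose a coordinate $j_0$ with $u_{j_0}\neq 0$; then $(pu)_{j_0}=p\,u_{j_0}$ has degree $\deg p+\deg u_{j_0}\geq\deg p>N$, so $pu\notin L$. Therefore $pL\cap L=\{0\}$, and the lemma follows.

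There is no deep obstacle here: once Theorem~\ref{inva} is available the argument is essentially one line of degree arithmetic. The only points requiring care are the routine verifications that similarity transports the statement and that a finite dimensional subspace of the algebraic direct sum $\rr^{(A)}$ is contained in a finite sub-sum $\rr^{(B)}$; the conceptual content is entirely in Theorem~\ref{inva}, which is exactly what makes a degree/valuation on coordinates available.
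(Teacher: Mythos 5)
Your proof is correct and follows the same overall strategy as the paper: use the structure theorem to replace $T$ by the multiplication operator on a finite power of $\rr$, then run a degree estimate. The one genuine difference is in the degree bookkeeping. The paper works directly with the grading $\deg(p/q)=\deg p-\deg q$ on $\rr$, sets $\Delta^+=\sup_{f\in L}\delta(f)$ and $\Delta^-=\inf_{f\in L\setminus\{0\}}\delta(f)$, and must prove both are finite via a linear-independence argument (elements of $L$ with pairwise distinct $\delta$-values in a fixed coordinate are linearly independent, so a monotone sequence of $\delta$-values would contradict $\dim L<\infty$). You sidestep this entirely by clearing denominators: after multiplying by a common denominator $Q$ (which is injective and commutes with $p(T)$, so preserves the triviality of $p(T)(L)\cap L$), all coordinates are polynomials, the lower bound on degrees is automatically $0$, and the upper bound $N$ is immediate from a basis. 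This makes your version slightly shorter and more elementary at that step; the paper's version has the minor advantage of not needing the reduction to $\pp^n$ and of exhibiting the explicit bound $m=\Delta^+-\Delta^-+1$ intrinsic to $L$. A small cosmetic point: you invoke the full Theorem~\ref{inva} (hence Zorn's lemma via a maximal $T$-independent subset of all of $X$), whereas the paper only applies Lemma~\ref{tin} to a maximal $T$-independent subset of a finite basis of $L$, which is constructive; this changes nothing mathematically.
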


\section{Linear maps without finite dimensional invariant subspaces}

Throughout this section $\kk$ is a field, $X$ is a non-trivial
linear space over $\kk$ and $T:X\to X$ is a $\kk$-linear map. We
also denote $\pp^*=\pp\setminus\{0\}$.

\begin{lemma} \label{inj} Let $T$ be a linear operator on a linear space
$X$. Then $T$ has no non-trivial finite dimensional invariant
subspaces if and only if $p(T)$ is injective for any non-zero
polynomial $p$.
\end{lemma}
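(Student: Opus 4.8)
The plan is to prove both implications in their contrapositive form. The only real idea needed is the elementary observation that a non-zero vector killed by some $p(T)$ generates a finite dimensional $T$-invariant subspace, and conversely that the restriction of $T$ to a finite dimensional invariant subspace satisfies a non-trivial polynomial identity.

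For the ``if'' direction, suppose $T$ has a non-trivial finite dimensional invariant subspace $L$. Since $\dim L<\infty$, the restriction $T|_L$ satisfies a non-zero polynomial identity $p(T|_L)=0$: for instance one may take $p$ to be the characteristic polynomial of $T|_L$ and invoke Cayley--Hamilton, or simply note that the powers of $T|_L$ must be linearly dependent because they live in a finite dimensional space of operators. Then $p\in\pp^*$ while $p(T)$ vanishes on $L\neq\{0\}$, so $p(T)$ is not injective. This contradicts the hypothesis, so no such $L$ exists.

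For the ``only if'' direction, suppose some $p(T)$ with $p\in\pp^*$ fails to be injective, and pick $x\in\ker p(T)$ with $x\neq 0$. Writing $p(t)=\sum_{k=0}^{d}a_kt^k$ with $a_d\neq 0$, so that $d=\deg p$, the relation $p(T)x=0$ gives $a_dT^dx=-\sum_{k=0}^{d-1}a_kT^kx$, hence $T^dx\in L:=\spann\{x,Tx,\dots,T^{d-1}x\}$. Consequently $L$ is $T$-invariant: $T$ sends $T^kx$ to $T^{k+1}x\in L$ for $k<d-1$ and sends $T^{d-1}x$ to $T^dx\in L$. Moreover $L$ is finite dimensional (of dimension at most $d$) and non-trivial since $x\neq 0$, contradicting the assumption that $T$ has no non-trivial finite dimensional invariant subspace.

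I do not anticipate a genuine obstacle here; the lemma is essentially a bookkeeping statement. The only points requiring a little care are to work over an arbitrary field $\kk$ — which is why I invoke a general annihilating polynomial of $T|_L$ rather than an eigenvalue — and to allow $p$ to be non-monic, which is why the leading coefficient $a_d$ is kept explicit in the cyclic-subspace computation; the degenerate case $d=1$ (where $L=\spann\{x\}$) is covered by the same argument.
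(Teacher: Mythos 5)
Your proof is correct and follows essentially the same route as the paper: the Cayley--Hamilton (or power-dependence) argument for one direction and the cyclic subspace $\spann\{x,Tx,\dots,T^{d-1}x\}$ generated by a kernel vector for the other. No issues.
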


\begin{proof} If $p$ is a non-zero polynomial and $p(T)$ is
non-injective, then there is non-zero $x\in X$ such that $p(T)x=0$.
Let $k=\deg p$. It is straightforward to verify that
$E=\spann\{x,Tx,\dots,T^{k-1}x\}$ is a non-trivial finite
dimensional invariant subspace for $T$. Assume now that $T$ has a
non-trivial finite dimensional invariant subspace $L$ and $p$ is the
characteristic polynomial of the restriction of $T$ to $L$. By the
Hamilton--Cayley theorem, $p(T)$ vanishes on $L$. Hence $p(T)$ is
non-injective.
\end{proof}

\begin{definition}\label{Tind}
For a linear operator $T$ on a vector space $X$ we say that vectors
$x_1,\dots,x_n$ in $X$ are $T$-{\it independent} if for any
polynomials $p_1,\dots,p_n$, the equality
$p_1(T)x_1+{\dots}+p_n(T)x_n=0$ implies $p_j=0$ for $1\leq j\leq n$.
Otherwise, we say that $x_1,\dots,x_n$ are $T$-{\it dependent}. A
set $A\subset X$ is called $T$-{\it independent} if any pairwise
different vectors $x_1,\dots,x_n\in A$ are $T$-{\it independent}.
\end{definition}

For a subset $A$ of a vector space $X$ and $T\in L(X)$, we denote
\begin{equation}\label{efat}
E(A,T)=\spann\biggl(\bigcup_{n=0}^\infty T^n(A)\biggr)\ \
\text{and}\ \ F(A,T)=\bigcup_{p\in\pp^*} p(T)^{-1}(E(A,T)).
\end{equation}
Clearly, $E(A,T)$ is the smallest subspace of $X$, containing $A$
and invariant with respect to $T$ and $F(A,T)$ consists of all $x\in
X$ for which
\begin{equation}\label{xat}
q(T)x=\sum_{a\in A} p_a(T)a
\end{equation}
for some $q\in\pp^*$ and $p=\{p_a\}_{a\in A}\in \pp^{(A)}$. Since
$p_a\neq 0$ for finitely many $a\in A$ only, the sum in the above
display is finite.

\begin{lemma}\label{tin} Let $T\in L(X)$ be a linear operator with
no non-trivial finite dimensional invariant subspaces and $A\subset
X$ be a $T$-independent set. Then $F(A,T)$ is a linear subspace of
$X$ invariant for $T$ and for every $x\in F(A,T)$, the rational
functions $f_{x,a}=\frac{p_a}{q}$ with $p\in \pp^{(A)}$ and
$q\in\pp^*$ satisfying $(\ref{xat})$ are uniquely determined by $x$
and $a\in A$. Moreover, the map
\begin{equation}\label{J}
J:F(A,T)\to \rr^{(A)},\quad J_x=\{f_{x,a}\}_{a\in A}
\end{equation}
is linear, injective and satisfies $JTx=M^{(A)}Jx$ for any $x\in
F(A,T)$. In particular, the restriction $T_A=T\bigr|_{F(A,T)}\in
L(F(A,T))$ is similar to the restriction of $M^{(A)}$ to the
invariant subspace $J(F(A,T))$.
\end{lemma}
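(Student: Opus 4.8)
The plan is to verify the claimed properties in the natural order: first well-definedness of $J$ (uniqueness of the rational functions), then that $F(A,T)$ is a $T$-invariant subspace, then linearity and injectivity of $J$, and finally the intertwining relation. The crucial technical point is uniqueness, and everything else will follow easily once that is in hand, using Lemma~\ref{inj} to convert ``$p(T)$ kills something'' into ``$p=0$''.

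First I would prove uniqueness. Suppose $x\in F(A,T)$ satisfies $q(T)x=\sum_a p_a(T)a$ and also $\tilde q(T)x=\sum_a \tilde p_a(T)a$ with $q,\tilde q\in\pp^*$. Multiplying the first equation by $\tilde q(T)$ and the second by $q(T)$ and subtracting, the $x$-terms cancel and we get $\sum_a\bigl(\tilde q p_a - q\tilde p_a\bigr)(T)a = 0$. Since $A$ is $T$-independent, each coefficient polynomial vanishes: $\tilde q p_a = q \tilde p_a$ for every $a$, hence $p_a/q = \tilde p_a/\tilde q$ in $\rr$. This shows $f_{x,a}=p_a/q$ depends only on $x$ and $a$, so $J$ is well defined. (Note $J_x\in\rr^{(A)}$ because only finitely many $p_a$ are nonzero.)

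Next, $F(A,T)$ is a subspace invariant under $T$: if $x,y\in F(A,T)$ with $q(T)x=\sum_a p_a(T)a$ and $r(T)y=\sum_a s_a(T)a$, then $(qr)(T)(\lambda x+\mu y)=\sum_a(\lambda r p_a+\mu q s_a)(T)a$, and $qr\in\pp^*$ since $\pp$ is an integral domain, so $\lambda x+\mu y\in F(A,T)$; applying $q(T)$ to $Tx$ gives $q(T)(Tx)=T\bigl(\sum_a p_a(T)a\bigr)=\sum_a (tp_a)(T)a$, so $Tx\in F(A,T)$. Linearity of $J$ is immediate from the same computation together with the uniqueness just established: reading off coefficients, $f_{\lambda x+\mu y,a}=\frac{\lambda r p_a+\mu q s_a}{qr}=\lambda\frac{p_a}{q}+\mu\frac{s_a}{r}=\lambda f_{x,a}+\mu f_{y,a}$. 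For injectivity, suppose $J_x=0$, i.e. $p_a=0$ for all $a$ in some representation $q(T)x=\sum_a p_a(T)a$; then $q(T)x=0$ with $q\in\pp^*$, and Lemma~\ref{inj} forces $x=0$.

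Finally, the intertwining relation: from $q(T)x=\sum_a p_a(T)a$ we obtained $q(T)(Tx)=\sum_a(tp_a)(T)a$, so by uniqueness $f_{Tx,a}=\frac{tp_a}{q}=t\cdot f_{x,a}=Mf_{x,a}=(M^{(A)}Jx)_a$, i.e. $JTx=M^{(A)}Jx$. Thus $J$ is a linear isomorphism from $F(A,T)$ onto the subspace $J(F(A,T))\subset\rr^{(A)}$, which is $M^{(A)}$-invariant precisely because $JT=M^{(A)}J$ on $F(A,T)$, and this identity exhibits $T_A$ as similar to $M^{(A)}\bigr|_{J(F(A,T))}$. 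The only step requiring genuine care is the uniqueness argument, since it is exactly there that $T$-independence of $A$ and the integral-domain structure of $\pp$ are used; the rest is bookkeeping.
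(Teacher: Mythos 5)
Your proposal is correct and follows essentially the same route as the paper: uniqueness via cross-multiplying two representations and invoking $T$-independence, then the common-denominator computation for linearity and subspace structure, Lemma~\ref{inj} for injectivity, and the degree-shift computation for the intertwining relation. The only cosmetic difference is that you subtract the two cross-multiplied identities to get a single vanishing sum before applying $T$-independence, which is a slightly more explicit form of the same step.
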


\begin{proof} First, we show that the rational functions
$f_{x,a}=\frac{p_a}q$ for $a\in A$ are uniquely determined by $x\in
F(A,T)$. Assume that $q_1,q_2\in \pp^*$ and $\{p_{1,a}\}_{a\in
A},\{p_{2,a}\}_{a\in A}\in \pp^{(A)}$ are such that
$$
q_1(T)x=\sum_{a\in A} p_{1,a}(T)a\quad\text{and}\quad
q_2(T)x=\sum_{a\in A} p_{2,a}(T)a.
$$
Applying $q_2(T)$ to the first equality and $q_1(T)$ to the second,
we get
$$
(q_1q_2)(T)x=\sum_{a\in A} (q_2p_{1,a})(T)a=\sum_{a\in A}
(q_1p_{2,a})(T)a.
$$
Since $A$ is $T$-independent, $q_2p_{1,a}=q_1p_{2,a}$ for each $a\in
A$. That is, $\frac{p_{1,a}}{q_1}=\frac{p_{2,a}}{q_2}$. Thus the
rational functions $f_{x,a}=\frac{p_a}q$ for $a\in A$ are uniquely
determined by $x$. It is also clear that the set $\{a\in
A:f_{x,a}\neq 0\}$  is finite for each $x\in X$. Thus the formula
(\ref{J}) defines a map $J:F(A,T)\to \rr^{(A)}$.

Our next step is to show that $F(A,T)$ is a linear subspace of $X$
and that the map $J$ is linear. Let $x,y\in F(A,T)$ and $t,s\in\kk$.
Pick $q_1,q_2\in \pp^*$ and $\{p_{1,a}\}_{a\in A},\{p_{2,a}\}_{a\in
A}\in \pp^{(A)}$ such that
$$
q_1(T)x=\sum_{a\in A} p_{1,a}(T)a\quad\text{and}\quad
q_2(T)y=\sum_{a\in A} p_{2,a}(T)a.
$$
Hence
$$
(q_1q_2)(T)(tx+sy)=\sum\limits_{a\in B}
(tp_{1,a}q_2+sp_{2,a}q_1)(T)a.
$$
It follows that $tx+sy\in F(A,T)$ and therefore $F(A,T)$ is a linear
subspace of $X$. Moreover, by definition of the rational functions
$f_{x,a}$, we have $f_{x,a}=\frac{p_{1,a}}{q_1}$,
$f_{y,a}=\frac{p_{2,a}}{q_2}$ and
$$
f_{tx+sy,a}=\frac{tp_{1,a}q_2+sp_{2,a}q_1}{q_1q_2}=tf_{x,a}+sf_{y,a}\quad
\text{for any $a\in A$,}
$$
which proves linearity of $J$. Since $Jx=0$ if and only if $q(T)x=0$
for some $q\in\pp^*$, Lemma~\ref{inj} implies that $\ker J=\{0\}$.
That is, $J$ is injective.

Now let us show that $F(A,T)$ is invariant for $T$ and that
$JTx=M^{(A)}Jx$ for any $x\in F(A,T)$. Let $x\in F(A,T)$ and $q\in
\pp^*$ and $\{p_{a}\}_{a\in A}\in \pp^{(A)}$ be such that
$$
q(T)x=\sum_{a\in A} p_a(T)a.\ \ \ \text{Then}\ \ \
q(T)(Tx)=\sum\limits_{a\in B}p_{1,a}(T)a,\ \ \text{where
$p_{1,a}(z)=zp_a(z)$.}
$$
Hence $Tx\in F(A,T)$ and therefore $F(A,T)$ is invariant for $T$.
Moreover, $f_{Tx,a}=\frac{p_{1,a}}q=Mf_{x,a}$ for any $x\in F(A,T)$
and $a\in A$. That is, $JTx=M^{(A)}Jx$ for any $x\in F(A,T)$. Since
$J$ is injective it is a linear isomorphism of $F(A,T)$ and
$Y=J(F(A,T))$. Then the equality $JTx=M^{(A)}Jx$ for $x\in F(A,T)$
implies that $Y$ is invariant for $M^{(A)}$ and that $T_A$ is
similar to $M^{(A)}\bigr|_{Y}$ with the linear map $J$ providing the
similarity.
\end{proof}

\subsection{Proof of Theorem~\ref{inva}}

Let $T\in L(X)$ be without non-trivial finite dimensional invariant
subspaces. A standard application of the Zorn lemma allows us to
take a maximal by inclusion $T$-independent subset $A$ of $X$. From
the definition of the spaces $F(B,T)$ it follows that if $B\subset
X$ is $T$-independent and $x\in X\setminus F(B,T)$, then
$B\cup\{x\}$ is also $T$ independent. Thus maximality of $A$ implies
that $X=F(A,T)$. By Lemma~\ref{tin}, $T=T_A$ is similar to a
restriction of $M^{(A)}$ to an invariant subspace.

\subsection{Proof of Lemma~\ref{empty1}}

Let $A\subset L$ be a linear basis of $L$. Since $L$ is finite
dimensional, $A$ is finite. Pick a maximal by inclusion
$T$-independent subset $B$ of $A$ (since $A$ is finite, we do not
need the Zorn lemma to do that). Now let $F=F(B,T)$ be the subspace
of $X$ defined in (\ref{efat}). Since $B$ is a maximal
$T$-independent subset of a basis of $L$, $L\subseteq F$. By
Lemma~\ref{tin}, $F$ is invariant for $T$. Thus we can without loss
of generality assume that $X=F$. Then by Lemma~\ref{tin}, we can
assume that $T$ is a restriction of $M^{(B)}$ to an invariant
subspace. Since extending $T$ beyond $X$ is not going to change the
spaces $L\cap p(T)(L)$, we can assume that $T=M^{(B)}$. Since $B$ is
finite, without loss of generality, $X=\rr^n$ and
$T=M\oplus{\dots}\oplus M$, where $n\in\N$.

Consider the degree function $\deg:\rr\to\Z\cup\{-\infty\}$. We set
$\deg(0)=-\infty$ and let $\deg(p/q)=\deg p-\deg q$, where $p$ and
$q$ are non-zero polynomials and the degrees in the right hand side
are the conventional degrees of polynomials. Clearly this function
is well-defined and is a grading on $\rr$. That is,
\begin{itemize}\itemsep=-2pt
\item[(g1)]$\deg(f_1f_2)=\deg(f_1)\!+\!\deg(f_2)$ and $\deg(f_1\!+\!f_2)\leq \max\{\deg f_1,\deg
f_2\}$ for any $f_1,f_2\in\rr$;
\item[(g2)]if $f_1,f_2\in\rr$ and $\deg f_1\neq \deg f_2$, then
$\deg(f_1+f_2)=\max\{\deg f_1,\deg f_2\}$.
\end{itemize}
By (g1), $\deg(Mf)=1+\deg f$ for each $f\in\rr$. For $f\in X=\rr^n$,
we write
$$
\delta(f)=\max_{1\leq j\leq n}\deg f_j.
$$
Clearly $\delta(0)=-\infty$ and $\delta(f)\in\Z$ for each $f\in
X\setminus \{0\}$. Let also
$$
\Delta^+=\sup_{f\in L} \delta(f)\ \ \text{and}\ \
\Delta^-=\inf_{f\in L\setminus\{0\}} \delta(f).
$$
Then $\Delta^+$ and $\Delta^-$ are finite. Indeed, assume that
either $\Delta^+=+\infty$ or $\Delta^-=-\infty$. Then there exists a
sequence $\{u_l\}_{l\in\N}$ in $L\setminus\{0\}$ such that
$\{\delta(u_l)\}_{l\in\N}$ is strictly monotonic. For each $l$ we
can pick $j(l)\in\{1,\dots,n\}$ such that $\delta(u_l)=\deg
(u_l)_{j(l)}$. Then there is $\nu\in \{1,\dots,n\}$ for which the
set $B_\nu=\{l\in\N:j(l)=\nu\}$ is infinite. It follows that the
degrees of $(u_l)_\nu$ for $l\in B_\nu$ are pairwise different.
Property (g2) of the degree function implies that the rational
functions $(u_l)_\nu$ for $l\in B_\nu$ are linearly independent.
Hence the infinite set $\{u_l:l\in B_\nu\}$ is linearly independent
in $X$, which is impossible since all $u_l$ belong to the finite
dimensional space $L$. Thus $\Delta^+$ and $\Delta^-$ are finite.

Now let $p\in\pp^*$ and $d=\deg p$. By (g1) and the equality
$(Tf)_j=Mf_j$, $\delta(p(T)f)=\delta(f)+d$ for each $f\in X$.
Therefore, $\inf\bigl\{\delta(f):f\in
p(T)(L)\setminus\{0\}\bigr\}=\Delta^-+d$. In particular, if
$d>\Delta^+-\Delta^-$, then
$$
\inf_{f\in
p(T)(L)\setminus\{0\}}\delta(f)=\Delta^-+d>\Delta^+=\sup_{f\in L}
\delta(f).
$$
Thus $\delta(u)>\delta(v)$ for any non-zero $u\in P(T)(L)$ and $v\in
L$, which implies that $p(T)(L)\cap L=\{0\}$ whenever $\deg
p>\Delta^+-\Delta^-$. Thus the number $m=\Delta^+-\Delta^-+1$
satisfies the desired condition. The proof of Lemma~\ref{empty1} is
complete.

\section{Proof of Theorem~\ref{omegA}}

The implications
$(\ref{omegA}.4)\Longrightarrow(\ref{omegA}.3)\Longrightarrow(\ref{omegA}.2)$
are trivial. Assume that $T$ is transitive and $T'$ has a
non-trivial finite dimensional invariant subspace. Then $T$ has a
non-trivial closed invariant subspace of finite codimension. Passing
to the quotient by this subspace, we obtain a transitive operator on
a finite dimensional topological vector space. Since there is only
one Hausdorff vector space topology on a finite dimensional space,
we arrive to a transitive operator on a finite dimensional Banach
space. Since transitivity and hypercyclicity for operators on
separable Banach spaces are equivalent, we obtain a hypercyclic
operator on a finite dimensional Banach space. On the other hand, it
is well known that such operators do not exist, see, for instance,
\cite{ww}. Thus (\ref{omegA}.2) implies (\ref{omegA}.1). It remains
to show that (\ref{omegA}.1) implies (\ref{omegA}.4).

Assume that (\ref{omegA}.1) is satisfied and (\ref{omegA}.4) fails.
Then there exist non-empty open subsets $U$ and $V$ of $X$ and a
sequence $\{p_l\}_{l\in\N}$ of polynomials such that $\deg
p_l\to\infty$ and $p_l(T)(U)\cap V=\varnothing$ for each $l\in\N$.
Since $X$ carries weak topology, there exist two finite linearly
independent sets $\{f_1,\dots,f_n\}$ and $\{g_1,\dots,g_m\}$ in $X'$
and two vectors $(a_1,\dots,a_n)\in\K^n$ and
$(b_1,\dots,b_m)\in\K^m$ such that $U_0\subseteq U$ and
$V_0\subseteq V$, where
\begin{equation*}
U_0=\{u\in X:f_k(u)=a_k\ \ \text{for}\ \ 1\leq k\leq n\}\ \
\text{and}\ \ V_0=\{u\in X:g_j(u)=b_j\ \ \text{for}\ \ 1\leq j\leq
m\}.
\end{equation*}
Let $L=\spann\{f_1,\dots,f_n,g_1,\dots,g_m\}$. Since $T'$ has no
non-trivial finite dimensional invariant subspaces, by
Lemma~\ref{empty1}, $p_l(T')(L)\cap L=\{0\}$ for any sufficiently
large $l$. For such an $l$, the equality $p_l(T')(L)\cap L=\{0\}$
together with the injectivity of $p_l(T')$, provided by
Lemma~\ref{inj}, and the definition of $L$ imply that the vectors
$p_l(T')g_1,\dots,p_l(T')g_m,f_1,\dots,f_n$ are linearly
independent. Hence there exists $u\in X$ such that
$$
\text{$p_l(T')g_j(u)=b_j$ \ for \ $1\leq j\leq m$\ \ and\ \
$f_k(u)=a_k$ \ for $1\leq k\leq n$.}
$$
Since $p_l(T')g_j(u)=g_j(p_l(T)u)$, the last display implies that
$u\in U_0\subseteq U$ and $p_l(T)u\in V_0\subseteq V$. Hence
$p_l(T)(U)\cap V$ contains $p_l(T)u$ and therefore is non-empty.
This contradiction completes the proof of Theorem~\ref{omegA}.

\begin{remark}\label{last} The only place in the proof of
Theorem~\ref{omegA}, where we used the nature of the underlying
field, is the reference to the absence of transitive operators on
non-trivial finite dimensional spaces. Thus Theorem~\ref{omegA}
extends to topological vector spaces with weak topology over any
topological field $\kk$ provided there are no transitive operators
on non-trivial finite dimensional topological vector spaces over
$\kk$.
\end{remark}

The author would like to thank the referee for helpful comments. 

\small\rm

\vskip1truecm

\scshape

\noindent Stanislav Shkarin

\noindent Queens's University Belfast

\noindent Department of Pure Mathematics

\noindent University road, Belfast, BT7 1NN, UK

\noindent E-mail address: \qquad {\tt s.shkarin@qub.ac.uk}


\begin{thebibliography}{99}

\itemsep=-2pt

\bibitem{bama}F.~Bayart and E.~Matheron, \it Hypercyclic operators failing the
hypercyclicity criterion on classical Banach spaces, \rm J.
Funct.Anal. \bf250\rm\  (2007), 426--441

\bibitem{bama-book}F.~Bayart and E.~Matheron, \it Dynamics of linear
operators, \rm Cambridge University Press, 2009

\bibitem{beco}J.~B\'es and J.~Conejero, \it Hypercyclic subspaces in
omega, \rm J. Math. Anal. Appl., \bf 316\rm\ (2006), 16--23

\bibitem{chan}K.~Chan and R.~Sanders, \it A weakly hypercyclic operator that is not norm
hypercyclic, \rm J. Operator Theory {\bf 52} (2004), 39--59

\bibitem{gele}G.~Herzog and R.~Lemmert, \it \"Uber Endomorphismen mit
dichten Bahnen, \rm Math. Z., \bf213\rm\ (1993), 473--477

\bibitem{peter1}H.~Petersson, \it Hypercyclicity in omega, \rm Proc.
Amer. Math. Soc. \bf135\rm\ (2007), 1145--1149

\bibitem{shk1}S.~Shkarin, \it Non-sequential weak supercyclicity and
hypercyclicity, \it J. Funct. Anal., \bf242\rm\ (2007), 37--77

\bibitem{ww}J.~Wengenroth, \it Hypercyclic operators on non-locally
convex spaces, \rm Proc. Amer. Math. Soc. \bf131\rm\ (2003),
1759--1761

\end{thebibliography}
\end{document}